\newcommand{\N}{\mathds{N}}
\newcommand{\R}{\mathds{R}}
\newcommand{\C}{\mathds{C}}
\newcommand{\1}{\mathds{1}}
\newcommand{\pp}{\mathrm{pp}}
\newcommand{\ac}{\mathrm{ac}}
\DeclareMathOperator{\TextRe}{Re}
\DeclareMathOperator{\TextIm}{Im}
\renewcommand{\Re}{\TextRe}
\renewcommand{\Im}{\TextIm}
\newcommand{\M}{\mathcal{M}}
\newcommand{\eps}{\varepsilon}
\DeclareMathOperator{\dom}{dom}
\providecommand{\abs}[1]{\left\lvert#1\right\rvert}
\providecommand{\norm}[1]{\left\lVert#1\right\rVert}
\providecommand{\set}[1]{\left\{ #1\right\}}
\newcommand{\from}{\colon}
\newcommand\llim{
\mathchoice{\vcenter{\hbox{${\scriptstyle{-}}$}}}
{\vcenter{\hbox{$\scriptstyle{-}$}}}
{\vcenter{\hbox{$\scriptscriptstyle{-}$}}}
{\vcenter{\hbox{$\scriptscriptstyle{-}$}}}}
\newcommand\rlim{
\mathchoice{\vcenter{\hbox{${\scriptstyle{+}}$}}}
{\vcenter{\hbox{$\scriptstyle{+}$}}}
{\vcenter{\hbox{$\scriptscriptstyle{+}$}}}
{\vcenter{\hbox{$\scriptscriptstyle{+}$}}}}
\theoremstyle{plain} 
\newtheorem{theorem}{Theorem}[section]
\newtheorem{proposition}[theorem]{Proposition}
\newtheorem{hypothesis}[theorem]{Hypothesis}
\theoremstyle{definition}
\newtheorem{example}[theorem]{Example}
\newtheorem{definition}[theorem]{Definition}
\newtheorem{remark}[theorem]{Remark}
\numberwithin{equation}{section}
\begin{document}

\title[ac spectrum for Laplacians on radial metric trees and periodicity]{Absolutely continuous spectrum for Laplacians on radial metric trees and periodicity}

\author[J.~Rohleder]{Jonathan Rohleder}

\author[C.~Seifert]{Christian Seifert}

\address{Stockholms universitet \\ Matematik \\
106 91 Stockholm \\
Sweden}
\email{jonathan.rohleder@math.su.se}

\address{TU Hamburg \\ Institut f\"ur Mathematik \\
Am Schwarzenberg-Campus~3 \\
Geb\"aude E \\
21073 Hamburg \\
Germany}
\email{christian.seifert@tuhh.de}

\subjclass[2010]{Primary 34L05, Secondary 34L40, 35Q40}

\keywords{Schr\"odinger operator, quantum graph, tree, absolutely continuous spectrum.}

\begin{abstract}
On an infinite, radial metric tree graph we consider the corresponding Laplacian equipped with self-adjoint vertex conditions from a large class including $\delta$- and weighted $\delta'$-couplings.
Assuming the numbers of different edge lengths, branching numbers and different coupling conditions to be finite, we prove that the presence of absolutely continuous spectrum implies that the sequence of geometric data of the tree as well as the coupling conditions are eventually periodic. 
On the other hand, we provide examples of self-adjoint, non-periodic couplings which admit absolutely continuous spectrum.
\end{abstract}

\maketitle

\section{Introduction}

Differential operators on metric graphs have been studied extensively during the last decades, see, for instance, the surveys~\cite{Berkolaiko2016, BerkolaikoKuchment2013, Kuchment2007, Mugnolo2014} and the references therein. Such operators act as ordinary differential operators on the edges, but exhibit a behavior which is highly nontrivial and substantially different from the one-dimensional case due to the influence of the coupling conditions at the vertices.

The question of quantum mechanical transport on metric graphs is related to the presence of absolutely continuous spectrum of corresponding graph Hamiltonians and has attracted a lot of attention recently, mainly for the Laplacian and for Schr\"odinger operators; 
cf.~\cite{BreuerFrank2009, EvansSolomyak2005, ExnerLipovsky2010,ExnerSeifertStollmann2014, Pankrashkin2009}. 
In this note we study the absolutely continuous spectrum of the Laplacian on a radial metric tree, i.e., a tree whose edge lengths and branchings are radially symmetric with respect to a fixed root vertex. 
These particular graphs exhibit a one-dimenional nature according to the tree structure, but its geometric growth properties are closer to high-dimensional spaces. 
Laplacians on radial trees can be studied by decomposing them into direct sums of ordinary differential operators on half-axes subject to generalized point interactions on an infinite sequence of points, see~\cite{Carlson2000, NaimarkSolomyak2000, SobolevSolomyak2002, Solomyak2004} for the case of standard (also called natural or 
Kirchhoff) vertex conditions and~\cite{ExnerLipovsky2010} for more general self-adjoint vertex couplings.

The aim of this note is to investigate, for a large class of self-adjoint coupling conditions at the vertices, the relation between the presence of absolutely continuous spectrum on the one hand and the periodicity of the tree and the coupling coefficients on the other hand. 
For one-dimensional Schr\"odinger operators it is well-established that absolutely continuous spectrum and ``finitely many configurations'' imply periodicity, see, e.g.,~\cite{CarmonaLacroix1990, KlassertLenzStollmann2011,LenzSeifertStollmann2014}. 
For the Laplacian with standard vertex conditions on a radial tree it was shown in~\cite{ExnerSeifertStollmann2014} that the analogous result is true, i.e., in the presence of finitely many different edge lengths and branching numbers a nonempty absolutely continuous spectrum implies periodicity of the tree, i.e., of the edge lengths and branching numbers. 
In the present note we establish the corresponding result for a wide class of self-adjoint vertex conditions, which includes standard, $\delta$ and weighted $\delta'$ couplings as 
well as their combinations. Our main result, Theorem~\ref{thm:main}, states that, provided one has finitely many different edge lenghts, branching numbers and coupling coefficients, a nonempty absolutely continuous spectrum for the Laplacian on a radial tree may prevail only if the tree and the coupling coefficients are eventually periodic. However, certain extra conditions on the coupling coefficients are imposed since there exist non-periodic self-adjoint vertex couplings such that the corresponding Laplacian has a nonempty absolutely continuous spectrum; see Section~\ref{sec:examples} below. 

By contraposition, the main result of this note yields a method for showing absence of absolutely continuous spectrum, namely ensuring aperiodicity and finitely many configurations; 
such kinds of reasonings are well-known for Schr\"odinger operators modeling aperiodically ordered media.
This result complements the recent works~\cite{BreuerFrank2009,ExnerLipovsky2010}, where absence of absolutely continuous spectrum for so-called sparse trees (where the set of edge lengths is unbounded) was shown.

The outline of this note is as follows. In Section~\ref{sec:prelim_main} we introduce the model, i.e.\ the Laplacians on radial metric tree graphs, describe the set of coupling conditions we are dealing with, and state our main result. The two subsequent sections contain material preliminary to the proof of the main result. Indeed, in Section \ref{sec:unitary_equivalence} we recall the direct sum decomposition into half-line operators from~\cite{ExnerLipovsky2010}. 
In Section \ref{sec:oracle} we state and prove a version of Remling's Oracle Theorem (see~\cite{Remling2007,Remling2011}) suitable for the present situation. 
Section~\ref{sec:proof_main_result} contains the actual proof of the main result. 
We conclude this note with a section containing examples of couplings being admissible in the main result as well as such couplings for which the assertion of the main result fails, thus illustrating the necessity of the conditions imposed in Theorem~\ref{thm:main}.

\section{Preliminaries and main result}
\label{sec:prelim_main}

Let $\Gamma$ be a connected metric tree consisting of an infinite set of vertices $V$ and an infinite set of edges $E$, where to each $e \in E$ a length $L (e)$ is assigned; this leads to a natural metric on $\Gamma$. By $\deg (v)$ we denote the degree of a vertex $v \in V$, i.e., the number of edges which are incident to $v$. 
We assume that one distinguished vertex $O\in V$ with $\deg (O) = 1$ is denoted the root of $\Gamma$. We say that some $v \in V$ is a vertex of generation $n \in \N_0$ if the unique path connecting $v$ with $O$ contains $n$ edges.
Furthermore, for $v \in V \setminus \{O\}$ we say that $\deg (v) - 1$ is the branching number of $v$, i.e., the number of forward neighbors; moreover, we define the branching number of $O$ to be one. 
Throughout, we assume that $\Gamma$ is radial (sometimes called regular, homogeneous, or radially symmetric) (with respect to $O$), i.e., if $v, w \in V \setminus \{O\}$ are vertices of the same generation then the 
distance of $v$ to $O$ equals the distance of $w$ to $O$ and the branching number of $v$ coincides with the branching number of $w$. For $n \in \N_0$ we denote by $t_n$ the distance of an arbitrary vertex of generation $n$ to the root $O$ and by $b_n$ the branching number of an arbitrary vertex of generation $n$. In particular, $t_0 = 0$ and $b_0 = 1$. Let $v \in V$ be any vertex and let $e \in E$ be incident to $v$. We say that $e$ emanates from $v$ if $v$ is the point on $e$ with the smallest distance to $O$; otherwise we say that $e$ terminates at $v$. We identify each edge $e \in E$ with the interval $[0, L (e)]$ and assume that the endpoint zero corresponds to the vertex from which $e$ emanates. 
Furthermore, we make the following assumption.

\begin{hypothesis}\label{hyp:graph}
$\Gamma$ is an infinite, connected, regular metric tree which satisfies
\begin{equation}
\label{eq:edge}
  \tau := \inf_{n\in\N_0} (t_{n+1}-t_{n})>0.
\end{equation}
\end{hypothesis}

Note that the condition~\eqref{eq:edge} means that the set of edge lengths of $\Gamma$ is bounded away from $0$. 

We are going to study Laplacians on $\Gamma$ subject to a broad class of self-adjoint vertex conditions. To define the operators under consideration, we impose the following assumptions.

\begin{hypothesis}\label{hyp:conditions}
For each $n \in \N$,
\begin{enumerate}
 \item $\alpha_n$ and $\beta_n$ are real numbers and $\gamma_n$ is complex;
 \item $U_n$ is a unitary $(b_n - 1) \times (b_n - 1)$-matrix;
 \item $V_n$ is a $(b_n - 1) \times b_n$-matrix with orthonormal rows such that in each row the sum of the entries is zero.
\end{enumerate}
Moreover,
\begin{enumerate}
 \item[(d)] $\theta_{0, 0} \in (- \pi/2, \pi/2]$.
\end{enumerate}
\end{hypothesis}

In order to write down the vertex conditions under consideration, if $v \in V$ is a vertex of generation $n \in \N$ then we denote by $e_{v 1 +}, \dots, e_{v b_n +}$ an enumeration of the edges which emanate from $v$. Correspondingly, we write $e_{v -}$ for the unique edge which terminates at $v$.
Let $f$ be a function on $\Gamma$ such that $f|_e\in H^2(0,L(e))$ for all $e\in E$.
We denote by $f_{v j +}$ the value of $f |_{e_{v j +}}$ at $0$, $j = 1, \dots, b_n$, 
 and by $f_{v -}$ the value of $f |_{e_{v -}}$ at $L (e_{v -})$; analogously we define $f_{v 1 +}', \dots, f_{v b_n +}'$ and $f_{v -}'$. Moreover, we write $f_{v+} = (f_{v 1 +}, f_{v 2 +}, \dots, f_{v b_n +} )^\top$ and $f_{v+}' = (f_{v 1 +}', f_{v 2 +}', \dots, f_{v b_n +}')^\top$. At any vertex $v$ of generation $n \in \N$ we require vertex conditions of the form
\begin{align}\label{eq:graphCond}
\begin{split}
 \sum_{j = 1}^{b_n} f_{v j +}' - f_{v -}' & = \frac{\alpha_n}{2} \bigg( \frac{1}{b_n} \sum_{j = 1}^{b_n} f_{v j +} + f_{v -} \bigg) + \frac{\gamma_n}{2} \bigg( \sum_{j = 1}^{b_n} f_{v j +}' + f_{v -}' \bigg), \\
 \frac{1}{b_n} \sum_{j = 1}^{b_n} f_{v j +} - f_{v -} & = -\frac{\overline{\gamma_n}}{2} \bigg( \frac{1}{b_n} \sum_{j = 1}^{b_n} f_{v j +} + f_{v -} \bigg) + \frac{\beta_n}{2} \bigg( \sum_{j = 1}^{b_n} f_{v j +}' + f_{v -}' \bigg), \\
 0 & = (U_n - I) V_n f_{v+} + i (U_n + I) V_n f_{v+}'.
 \end{split}
\end{align}
Note that these conditions depend only on the generation $n\in\N$ and not on the particular vertex $v$ of generation $n$.
In addition, at the root $O$ we impose the condition
\begin{align}\label{eq:graphCondRoot}
 f' (O) + f (O) \tan \theta_{0, 0} = 0,
\end{align}
where the case $\theta_{0, 0} = \pi/2$ is to be interpreted as a Dirichlet boundary condition $f (O) = 0$. This leads to the following definition.

\begin{definition}\label{def:operator}
Let Hypothesis~\ref{hyp:graph} and~\ref{hyp:conditions} be satisfied. The operator $H_\Gamma$ in $L^2 (\Gamma)$ has the domain $\dom H_\Gamma$ which consists of all $f \in L^2 (\Gamma)$ such that
\begin{enumerate}
 \item $f |_e \in H^2 (0, L (e))$ for each $e \in E$,
 \item for each vertex $v \in V$ with generation $n \in \N$ the conditions~\eqref{eq:graphCond} are satisfied, and
 \item at the root the condition~\eqref{eq:graphCondRoot} holds.
\end{enumerate}
Moreover, the action of $H_\Gamma$ is given by
\begin{align*}
 (H_\Gamma f) |_e = - (f |_e)'' \quad \text{for each}~e \in E.
\end{align*}
\end{definition}

The operator $H_\Gamma$ is selfadjoint in $L^2 (\Gamma)$, see \cite[Lemma 2.1]{ExnerLipovsky2010}.

\begin{remark}
The conditions~\eqref{eq:graphCond} are not the most general possible for a self-adjoint vertex coupling on a metric graph. 
The vertex conditions considered here follow the radial symmetry of the tree since they depend only on the generation but not on the specific vertex. 
Moreover, by the assumption on the matrices $V_n$, the conditions~\eqref{eq:graphCond} separate those functions being radially symmetric on the subtree emanating from~$v$ from those being orthogonal to the radially symmetric functions. 
Amongst many others, the class of vertex conditions under consideration includes standard (also called Kirchhoff or natural) conditions as well as $\delta$ and weighted $\delta'$ conditions; cf.\ Section~\ref{sec:examples}.
\end{remark}

We will be interested mainly in those vertex conditions which respect the tree in the sense that they do not separate the tree into finite pieces. 
We say that the vertex conditions~\eqref{eq:graphCond} are {\em separating for generation $n\in\N$} if
\begin{align*}
 \alpha_n \beta_n + |\gamma_n|^2 = 4 \quad \text{and} \quad \Im \gamma_n = 0.
\end{align*}

The following theorem contains the main result of the present note.

\begin{theorem} \label{thm:main}
Let Hypothesis~\ref{hyp:graph} and~\ref{hyp:conditions} be satisfied and let $H_\Gamma$ be the Laplacian on $\Gamma$ in Definition~\ref{def:operator}. Assume in addition that the following assumptions hold.
\begin{enumerate}
 \item The sets $\{t_{n+1}-t_{n}:\; n\in\N\}$, $\{b_n:\; n\in\N\}$, $\{\alpha_n:\; n\in\N\}$, $\{\beta_n:\; n\in\N\}$ and $\{\gamma_n:\; n\in\N\}$ are finite. 
 Furthermore, at most finitely many $b_n$ equal~$1$.
 \item The vertex conditions~\eqref{eq:graphCond} are separating for at most finitely many generations.
 \item For all but finitely many $n \in \N$, $$ 4 (\sqrt{b_n} + 1)^2 + (\alpha_n \beta_n + |\gamma_n|^2) (\sqrt{b_n} - 1)^2 + 4 (1 - b_n) \Re \gamma_n \neq 0.$$
 \item For all but finitely many $n \in \N$, $\Re \gamma_n = 0$ and $\alpha_n \beta_n + |\gamma_n|^2 + 4 \neq 0$.
\end{enumerate}
If $H_\Gamma$ has a nonempty absolutely continuous spectrum then the sequence $\bigl((t_{n+1}-t_{n},b_n,\alpha_n,\beta_n,\gamma_n)\bigr)_n$ is eventually periodic,
i.e.~there exists $n_0\in\N$ such that from $n_0$ on this sequence is periodic.
\end{theorem}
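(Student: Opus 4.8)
The plan is to transport Remling's one-dimensional strategy to the radial setting by means of the decomposition announced in Section~\ref{sec:unitary_equivalence}. First I would use that unitary equivalence to write $H_\Gamma$ as a direct orthogonal sum of half-line operators. By the radial symmetry each summand is a Schr\"odinger-type operator that acts as $-d^2/dx^2$ on each interval $(t_m,t_{m+1})$ and carries a generalized point interaction at every point $t_m$ in its tail, governed by the data $b_m,\alpha_m,\beta_m,\gamma_m$; distinct summands differ only through their left endpoint and a boundary condition there (the parameters $U_m,V_m$ enter only the antisymmetric coupling at the vertex where a channel is ``born'' and therefore do not survive into the tail, which explains their absence from the conclusion). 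Since the absolutely continuous spectrum is a tail property, it is common to all summands, so $\sigma_{\ac}(H_\Gamma)=\sigma_{\ac}(A)=:\Sigma$ for a representative half-line operator $A$ of this type. A nonempty absolutely continuous spectrum means that the essential support of the absolutely continuous part of $A$ has positive Lebesgue measure, so $\abs{\Sigma}>0$.

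Next I would encode $A$ by a sequence of $2\times 2$ transfer data: to the $m$-th block I associate the map that first propagates a free solution across the edge of length $t_{m+1}-t_m$ and then implements the surviving radial part of the vertex condition~\eqref{eq:graphCond}, i.e.\ the coefficients $\alpha_m,\beta_m,\gamma_m$ together with the factor $\sqrt{b_m}$. Assumption~(a) forces these five-tuples, and hence the associated transfer data, to range over a finite set $\mathcal{C}$. The non-degeneracy conditions~(c) and~(d) are then used to guarantee that each vertex map is invertible and that distinct admissible five-tuples yield distinct transfer data, so that the coding is faithful; condition~(b) ensures that, after discarding the finitely many separating generations, the point interactions genuinely transmit and $A$ is, up to a finite modification at its left end, a single half-line operator to which the one-dimensional theory applies.

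With this coding in place I would invoke the version of Remling's Oracle Theorem established in Section~\ref{sec:oracle}. Applied to $A$, it yields, because $\abs{\Sigma}>0$, that every right limit of the coefficient sequence is reflectionless on $\Sigma$; in the quantitative oracle form this says that there are a window length $L\in\N$, a prediction map $\Delta\from\mathcal{C}^{\,L}\to\mathcal{C}$ and an index $n_0$ such that the $m$-th coefficient equals $\Delta$ of its $L$ predecessors for all $m\ge n_0$. Here the finiteness of $\mathcal{C}$ is decisive: the oracle predicts the next symbol only up to an error tending to zero, but once that error drops below half the minimal distance between distinct elements of $\mathcal{C}$ the prediction becomes exact. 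A finite-alphabet sequence in which each symbol past $n_0$ is a fixed function of the preceding $L$ symbols is eventually periodic --- two of the finitely many length-$L$ windows must coincide beyond $n_0$, and determinism propagates the repetition forever. Pulling this periodicity back through the faithful coding of the previous paragraph returns the eventual periodicity of $\bigl((t_{m+1}-t_m,b_m,\alpha_m,\beta_m,\gamma_m)\bigr)_m$, which is the assertion.

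I expect the principal obstacle to lie not in the final combinatorial step but in the verification that Remling's framework genuinely applies after the reduction. Concretely, the hard part will be to recast the sequence of free edge propagations interlaced with the generalized point interactions as a canonical system (or a Schr\"odinger operator with a measure-valued potential) whose coefficients lie in a compact set, and then to check the precise hypotheses of the Oracle Theorem in that language --- in particular that reflectionlessness on $\Sigma$ forces the claimed determinism. This is exactly where the non-degeneracy conditions~(c) and~(d) and the separation condition~(b) are consumed, and it is the reason the oracle must be re-proved in Section~\ref{sec:oracle} rather than quoted directly.
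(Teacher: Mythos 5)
Your proposal follows essentially the same route as the paper's proof: reduce via the unitary equivalence of Proposition~\ref{prop:unitary_equivalence} to a half-line operator with generalized point interactions, encode it as a measure of finite local complexity, apply the Oracle Theorem of Section~\ref{sec:oracle} so that approximate prediction over a finite alphabet becomes exact determinism, conclude eventual periodicity by pigeonhole and iteration, and pull the periodicity back through the injective correspondence between the tree data $(b_k,\alpha_k,\beta_k,\gamma_k)$ and the half-line data $(\mathfrak{a}_k,\mathfrak{b}_k,\mathfrak{c}_k)$ --- which is precisely where assumptions (b)--(d) are consumed, as you anticipated. The only substantive work you defer (the explicit recovery formulas showing the coding is faithful and that separating measures come only from separating vertex conditions) is exactly the computation carried out in Step~2 of Section~\ref{sec:proof_main_result}.
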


The proof of this theorem rests on two main arguments: a unitary equivalence of $H_\Gamma$ to a direct sum of halfline operators with unique correspondence of the coupling coefficients,
and a suitable version of Remling's Oracle Theorem.

\begin{remark}
  Assumption (a) implies that the sequence of data has only finitely many values. The additional restriction on the $b_n$ excludes nontriviality of coupling for the corresponding halfline operators; cf.~\cite{BreuerFrank2009,ExnerSeifertStollmann2014}.
  Assumptions (b) and (c) are required to yield coupling conditions of one and the same form for the corresponding halfline operators, see Section \ref{sec:unitary_equivalence} below.
  Finally, assumption (d) yields injectivity of the function which maps the data sequence on the tree $\Gamma$ to the data sequence for the corresponding halfline operator, i.e.~one can then reconstruct the data on the tree by knowing the data on the halfline, see Section \ref{sec:proof_main_result}. 
  Finally, note that if the vertex conditions~\eqref{eq:graphCond} are separating for infinitely many generations then the operator $H_\Gamma$ decomposes into the direct sum of Laplacians on finite trees and thus the absolutely continuous spectrum of $H_\Gamma$ is empty, regardless of periodicity properties of the geometry or vertex conditions.
\end{remark}

\section{Unitary equivalence to halfline operators}
\label{sec:unitary_equivalence}

Our analysis of the operator $H_\Gamma$ makes use of a direct sum decomposition of $H_\Gamma$ into operators acting on half-axes; this decomposition can be found in~\cite{ExnerLipovsky2010} and it generalizes the decomposition used in~\cite{Carlson2000,NaimarkSolomyak2000,SobolevSolomyak2002,Solomyak2004} in the case of standard vertex conditions. To be more specific, assume that $\alpha_n, \beta_n, \gamma_n$ satisfy the condition~(c) of Theorem~\ref{thm:main} (without loss of generality we can assume that this condition is satisfied for all $n \in \N$) and define
\begin{align}
\label{eq:parameters}
 \mathfrak{a}_n & = \frac{16 \alpha_n}{4 (\sqrt{b_n} + 1)^2 + (\alpha_n \beta_n + |\gamma_n|^2) (\sqrt{b_n} - 1)^2 + 4 (1 - b_n) \Re \gamma_n},\nonumber \\
 \mathfrak{b}_n & = \frac{16 b_n \beta_n}{4 (\sqrt{b_n} + 1)^2 + (\alpha_n \beta_n + |\gamma_n|^2) (\sqrt{b_n} - 1)^2 + 4 (1 - b_n) \Re \gamma_n}, \\
 \mathfrak{c}_n & = 2 \frac{(1 - b_n) (4 + \alpha_n \beta_n + |\gamma_n|^2) + 8 i \sqrt{b_n} \Im \gamma_n + 4 (b_n + 1) \Re \gamma_n}{4 (\sqrt{b_n} + 1)^2 + (\alpha_n \beta_n + |\gamma_n|^2) (\sqrt{b_n} - 1)^2 + 4 (1 - b_n) \Re \gamma_n}. \nonumber
\end{align}

Furthermore, diagonalize the unitary matrices $U_n$ in the form $U_n = W_n^* D_n W_n$ with unitary $W_n$ and $D_n = \textup{diag}\, (e^{i \theta_{n, 1}}, \dots, e^{i \theta_{n, b_n - 1}})$. For $k \in \N$, at $t_k$ we consider interface conditions of the form
\begin{align}\label{eq:lineCond}
\begin{split}
 u'(t_k\rlim) - u'(t_k\llim) & = \frac{\mathfrak{a}_k}{2} \bigl(u(t_k\rlim) + u(t_k\llim)\bigr) + \frac{\mathfrak{c}_k}{2} \bigl(u'(t_k\rlim) + u'(t_k\llim)\bigr), \\
  u(t_k\rlim) - u(t_k\llim) & = -\frac{\overline{\mathfrak{c}_k}}{2} \bigl(u(t_k\rlim) + u(t_k\llim)\bigr) + \frac{\mathfrak{b}_k}{2} \bigl(u'(t_k\rlim) + u'(t_k\llim)\bigr).
\end{split}
\end{align}
The appropriate halfline operators are defined in the following way.

\begin{definition}\label{def:Hns}
Let Hypothesis~\ref{hyp:graph} and~\ref{hyp:conditions} be satisfied and let $\mathfrak{a}_n$, $\mathfrak{b}_n$, $\mathfrak{c}_n$ be defined in~\eqref{eq:parameters} for each $n \in \N$. 
Moreover, let $n \in \N$ and $s\in\set{1, \dots, b_n - 1}$, or $n=0$ and $s=0$. The domain $\dom H_{n, s}$ of the operator $H_{n, s}$ in $L^2 (t_n, \infty)$ consists of all functions $u \in L^2 (t_n, \infty)$ such that
\begin{enumerate}
 \item $u |_{(t_k, t_{k + 1})} \in H^2 (t_k, t_{k + 1})$ for each $k \geq n$,
 \item $u$ satisfies~\eqref{eq:lineCond} for each $k > n$, and
 \item $u'(t_n) +  u(t_n)\tan \theta_{n,s} = 0$.
\end{enumerate}
Moreover, the action of $H_{n, s}$ is given by
\begin{align*}
 (H_{n, s} u) |_{(t_{k}, t_{k + 1})} = - u |_{(t_{k}, t_{k + 1})}'' \quad \text{for each}~k \geq n.
\end{align*}
\end{definition}

It can be seen easily that the operators $H_{n, s}$ in $L^2 (t_n, \infty)$ are self-adjoint. The following proposition can be found in~\cite[Theorem 5.1]{ExnerLipovsky2010}. 

\begin{proposition}
\label{prop:unitary_equivalence}
Let Hypothesis~\ref{hyp:graph} and~\ref{hyp:conditions} be satisfied. Moreover, for $n \in \N$ and $s\in\set{1, \dots, b_n - 1}$, or $n=0$, $s=0$, let $H_{n, s}$ be the self-adjoint operator in Definition~\ref{def:Hns}. 
Assume that condition~(c) of Theorem~\ref{thm:main} is satisfied for each $n \in \N$. Then the space $L^2 (\Gamma)$ is unitarily equivalent to the direct sum 
\begin{align*}
 L^2 (0, \infty) \oplus \bigoplus_{n = 1}^\infty \bigoplus_{s = 1}^{b_n - 1} L^2 (t_n, \infty) \otimes \C^{b_0\cdots b_{n-1}}
\end{align*}
and, with respect to this decomposition, the operator $H_\Gamma$ is unitarily equivalent to 
\begin{align*}
 H_{0,0} \oplus \bigoplus_{n\in\N} \bigoplus_{s=1}^{b_n-1} H_{n,s} \otimes I_{\C^{b_0\cdots b_{n-1}}}.
\end{align*}
\end{proposition}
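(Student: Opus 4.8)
The plan is to realise the asserted unitary equivalence as the decomposition of $L^2(\Gamma)$ induced by the radial symmetry of the tree, following the scheme of~\cite{NaimarkSolomyak2000, Solomyak2004} and adapting it to the general couplings~\eqref{eq:graphCond}. First I would pass to the radial coordinate: for $k\geq 0$ the tree has exactly $N_k := b_0\cdots b_k$ edges joining generation $k$ to generation $k+1$, all of radial length $t_{k+1}-t_k$, and radial symmetry identifies this $k$-th shell with $L^2(t_k,t_{k+1})\otimes\C^{N_k}$. At every vertex $v$ of generation $n\geq 1$ the $b_n$ emanating edges carry a natural permutation symmetry, and I would split $\C^{b_n}$ into the span of $\tfrac{1}{\sqrt{b_n}}(1,\dots,1)^\top$ and its orthogonal complement; by Hypothesis~\ref{hyp:conditions}(c) the rows of $V_n$ form an orthonormal basis of that complement. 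Iterating this splitting over all branchings yields an orthogonal decomposition of $L^2(\Gamma)$ into the fully radially symmetric functions, giving one copy of $L^2(0,\infty)$, together with, for each generation $n$ and each of the $b_n-1$ complementary directions, a copy of $L^2(t_n,\infty)$; since there are $N_{n-1}=b_0\cdots b_{n-1}$ vertices of generation $n$ and $H_\Gamma$ acts identically on all of them, this accounts precisely for the summands $L^2(t_n,\infty)\otimes\C^{b_0\cdots b_{n-1}}$ and for the tensor factors $I_{\C^{b_0\cdots b_{n-1}}}$.

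The second step is to compute the action of $H_\Gamma$ on the symmetric part. Here a function takes a common value $g_k(t)$ on all $N_k$ edges of the $k$-th shell, and the isometry onto $L^2(0,\infty)$ is $u(t)=\sqrt{N_k}\,g_k(t)$ on $(t_k,t_{k+1})$; since $N_k=N_{k-1}b_k$, crossing a generation-$k$ vertex rescales the representative by $\sqrt{b_k}$. Substituting $f_{v-}=u(t_k\llim)/\sqrt{N_{k-1}}$ and $f_{vj+}=u(t_k\rlim)/(\sqrt{N_{k-1}}\sqrt{b_k})$, together with the analogous identities for the derivatives, into the first two lines of~\eqref{eq:graphCond} turns them into two scalar relations among $u(t_k\rlim),u(t_k\llim),u'(t_k\rlim),u'(t_k\llim)$ in which $\sqrt{b_k}$ multiplies $u'(t_k\rlim)$ and $1/\sqrt{b_k}$ multiplies $u(t_k\rlim)$. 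Solving this $2\times2$ system for the jumps $u'(t_k\rlim)-u'(t_k\llim)$ and $u(t_k\rlim)-u(t_k\llim)$ brings it into the canonical interface form~\eqref{eq:lineCond}: the common denominator is exactly $4(\sqrt{b_k}+1)^2+(\alpha_k\beta_k+|\gamma_k|^2)(\sqrt{b_k}-1)^2+4(1-b_k)\Re\gamma_k$, nonvanishing by condition~(c), and reading off the coefficients yields precisely the formulas~\eqref{eq:parameters}. Together with the root condition~\eqref{eq:graphCondRoot} this identifies the symmetric part with $H_{0,0}$.

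For a summand attached at generation $n$ in a complementary direction, I would argue identically below the vertex: there the function is radially symmetric on each sub-subtree, so the interface conditions at every $t_k$ with $k>n$ are governed by the same computation and hence the same $\mathfrak{a}_k,\mathfrak{b}_k,\mathfrak{c}_k$, giving~\eqref{eq:lineCond}. The only genuinely new ingredient is the condition at $t_n$: the third line of~\eqref{eq:graphCond} reads $0=(U_n-I)V_nf_{v+}+i(U_n+I)V_nf_{v+}'$, and diagonalising $U_n=W_n^*D_nW_n$ decouples it, in the eigenbasis, into $b_n-1$ scalar relations $(e^{i\theta_{n,s}}-1)\xi_s+i(e^{i\theta_{n,s}}+1)\xi_s'=0$. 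Each such relation is a homogeneous Robin condition, equivalent to the form $u'(t_n)+u(t_n)\tan\theta_{n,s}=0$ of Definition~\ref{def:Hns}(c), the homogeneity making the normalisation immaterial here. Collecting all summands then yields the asserted decomposition of $H_\Gamma$.

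The main obstacle is not conceptual but computational: it is the $2\times2$ reduction of the second paragraph, where the $\sqrt{b_k}$ rescaling forced by the $L^2$ normalisation across a branching must be propagated through~\eqref{eq:graphCond} with full bookkeeping of $\Re\gamma_k$ and $\Im\gamma_k$ in order to land exactly on~\eqref{eq:parameters}. In particular one must verify that the output is again of the self-adjoint type~\eqref{eq:lineCond}, i.e.\ that $\mathfrak{a}_k,\mathfrak{b}_k\in\R$ and that the off-diagonal coefficients enter as $\mathfrak{c}_k$ and $\overline{\mathfrak{c}_k}$ respectively; this is precisely where condition~(c) of Theorem~\ref{thm:main} is needed to keep the denominator away from zero and the reduction well defined.
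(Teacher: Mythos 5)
Your proposal is correct, but note that the paper itself contains no proof of Proposition~\ref{prop:unitary_equivalence}: it is quoted verbatim from \cite[Theorem 5.1]{ExnerLipovsky2010}. What you have written is essentially a reconstruction of the proof given in that reference, which adapts the symmetry decomposition of \cite{NaimarkSolomyak2000,Solomyak2004} to the couplings \eqref{eq:graphCond}: the splitting of $\C^{b_n}$ at each vertex into the constant direction and its orthogonal complement spanned by the rows of $V_n$, the observation that the first two lines of \eqref{eq:graphCond} involve only the symmetric combinations $\sum_j f_{vj+}$, $\sum_j f'_{vj+}$, $f_{v-}$, $f'_{v-}$ while the third line involves only $V_nf_{v+}$, $V_nf_{v+}'$ (so the conditions decouple along your subspaces), and then the $\sqrt{b_k}$-rescaled $2\times 2$ reduction. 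Your claim about the common denominator is also correct: writing your two scalar relations in the variables $S=u(t_k\rlim)+u(t_k\llim)$, $D=u(t_k\rlim)-u(t_k\llim)$, $S'=u'(t_k\rlim)+u'(t_k\llim)$, $D'=u'(t_k\rlim)-u'(t_k\llim)$, the determinant of the linear system for $(D',D)$ equals
\begin{align*}
\frac{1}{4\sqrt{b_k}}\Bigl(4(\sqrt{b_k}+1)^2+(\alpha_k\beta_k+|\gamma_k|^2)(\sqrt{b_k}-1)^2+4(1-b_k)\Re\gamma_k\Bigr),
\end{align*}
so condition~(c) of Theorem~\ref{thm:main} is exactly what makes the reduction to the form \eqref{eq:lineCond}, \eqref{eq:parameters} well defined.

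One detail deserves correction. The scalar form of the third line of \eqref{eq:graphCond} in the eigenbasis of $U_n$, namely $(e^{i\theta_{n,s}}-1)\xi_s+i(e^{i\theta_{n,s}}+1)\xi_s'=0$, is the Robin condition $\xi_s'+\tan(\theta_{n,s}/2)\,\xi_s=0$, \emph{not} $\xi_s'+\tan(\theta_{n,s})\,\xi_s=0$: for instance, the eigenvalue $-1$ (argument $\pi$) gives the Dirichlet condition $\xi_s=0$, whereas $\tan\pi=0$ would give Neumann. So to land literally on Definition~\ref{def:Hns}(c) one must take $\theta_{n,s}$ to be \emph{half} the argument of the corresponding eigenvalue of $U_n$, i.e.\ write $D_n=\textup{diag}\,(e^{2i\theta_{n,1}},\dots,e^{2i\theta_{n,b_n-1}})$ with $\theta_{n,s}\in(-\pi/2,\pi/2]$, consistent with the root convention in Hypothesis~\ref{hyp:conditions}(d). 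This is a harmless reparametrization (and the ambiguity is arguably present in the paper's own statement of Definition~\ref{def:Hns}), but your remark that ``the homogeneity makes the normalisation immaterial'' addresses only the scaling of the eigenvector $\xi_s$, not this halving of the angle, so as written your Step~3 does not quite match the boundary condition it claims to produce.
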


\section{A variant of Remling's Oracle Theorem}\label{sec:oracle}

In this section we prove a version of Remling's Oracle Theorem which applies to Laplacians on half-axes subject to interface conditions of the form~\eqref{eq:lineCond}. For this we rephrase these operators in the language of measures. 

\begin{definition}\label{def:mu}
Let $\tau > 0$ and let $(t_k)_{k \in \N_0}$ be a non-decreasing sequence of real numbers satisfying $\inf_{k \in \N_0} (t_{k + 1} - t_k) \geq \tau$. Moreover, for $k \in \N$ let $\mathfrak{a}_k, \mathfrak{b}_k \in \R$ and $\mathfrak{c}_k \in \C$. We define a matrix-valued Borel measure $\mu$ on the real axis by
\begin{align}\label{eq:mu}
 \mu := \sum_{n\in\N} \begin{pmatrix}
			  \mathfrak{a}_n & \mathfrak{c}_n \\ \overline{\mathfrak{c}_n} & \mathfrak{b}_n
			  \end{pmatrix} \delta_{t_n} = \begin{pmatrix}
	      \sum_{n\in\N} \mathfrak{a}_n \delta_{t_n} & \sum_{n\in\N} \mathfrak{c}_n \delta_{t_n} \\
	      \sum_{n\in\N} \overline{\mathfrak{c}_n} \delta_{t_n} & \sum_{n\in\N} \mathfrak{b}_n \delta_{t_n}
	  \end{pmatrix}.
\end{align}
The operator $H_\mu$ in $L^2 (t_0, \infty)$ formally associated with $\mu$ is defined as follows: its domain consists of all $u \in L^2 (t_0, \infty)$ such that
\begin{enumerate}
 \item $u |_{(t_k, t_{k + 1})} \in H^2 (t_k, t_{k + 1})$ for each $k \in \N_0$,
 \item $u$ satisfies~\eqref{eq:lineCond} at $t_k$ for each $k \in \N$, and
 \item $u (t_0) = 0$,
\end{enumerate}
and the action of $H_\mu$ is given by
\begin{align*}
 (H_\mu u) |_{(t_k, t_{k + 1})} = - u |_{(t_k, t_{k + 1})}'' \quad \text{for each}~k \in \N_0.
\end{align*}
\end{definition}

In analogy to the operators in Section~\ref{sec:unitary_equivalence} the operator $H_\mu$ is self-adjoint.


A mapping $\mu\from \set{B\subseteq \R:\; B\;\text{bounded Borel set}}\to \C$ is called \emph{local measure} if $\mu(\cdot\cap K)$ is a complex Radon measure for all $K\subseteq \R$ compact.
Let $\M(\R)$ be the space of local measures, and $\M(\R)^{2\times 2}$ be the 2-by-2-matrices of local measures.
For $\mu\in \M(\R)^{2\times 2}$ its variation is given by
\begin{align*}
 \abs{\mu}(A) = \sup \sum_{k=1}^n \norm{\mu(A_k)}
\end{align*}
for any Borel set $A \subset \R$, where the supremum is taken over all $n \in \N$ and all decompositions of $A$ 
into $n$ pairwise disjoint Borel sets $A_1, \dots, A_n \subset A$ with $A = \bigcup_{k = 1}^n A_k$, 
and $\|\cdot\|$ denotes the spectral norm. 
For $I\subseteq \R$ open and $C>0$ let $\M^C(I; \C^{2\times 2})$ be the set of all local matrix measures $\mu\in \M(\R)^{2\times 2}$ such that
$|\mu|$ is uniformly locally bounded by $C$, that is, $|\mu| ( (t, t + 1]) \leq C$ for all~$t \in \R$, 
and such that $\abs{\mu} (\R \setminus I) = 0$. 
It turns out that $\M^{C}(I;\C^{2\times 2})$ equipped with the topology of vague convergence is compact and hence metrizable, see \cite[Proposition 3.1]{LenzSeifertStollmann2014}.
Let $d_I$ be a metric on $\M^{C}(I;\C^{2\times 2})$ generating the topology of vague convergence.
For $I\subseteq \R$ open, $C>0$ and $\tau>0$ let $\M_{\pp}^{\tau,C} (I; \C_\textup{symm}^{2\times 2})$ be the subset of $\M^{C}(I;\C^{2\times 2})$ of measures of the form given in Definition~\ref{def:mu}. It turns out that 
$\M_{\pp}^{\tau,C} (I; \C_\textup{symm}^{2\times 2})$ is closed and hence also compact.
Note that $\M_{\pp}^{\tau,C} (I; \C_\textup{symm}^{2\times 2}) \subseteq \M_{\pp}^{\tau,C} (\R; \C_\textup{symm}^{2\times 2})$ holds for each subset $I\subseteq \R$.
Furthermore, let $\M_{\pp}^C(I; \C_\textup{symm}^{2\times 2})$ be the subset of $\M^C(I; \C^{2\times 2})$ of all pure point local matrix measures with values in $\C_\textup{symm}^{2\times 2}$.

We can now prove a version of Remling's Oracle Theorem (\cite[Theorem 2]{Remling2007}) for our setting, which generalizes \cite[Theorem 3.8]{ExnerSeifertStollmann2014} to the present, more general coupling conditions. Here we say that a measure $\mu \in \M_{\pp}^{\tau,C} \bigl(\R;\C_\textup{symm}^{2\times 2}\bigr)$ is \emph{separating for generation $n \in \N$} provided
\begin{align*}
 \mathfrak{a}_n \mathfrak{b}_n + \abs{\mathfrak{c}_n}^2 = 4 \quad \text{and} \quad \Im \mathfrak{c}_n = 0.
\end{align*}
Moreover, we write $S_x\mu:=\mu(\cdot+x)$ ($x\in\R$) for the translate of $\mu$ by $x$. By $\Sigma_{\rm ac} (H_\mu)$ we denote an essential support of the absolutely continuous part of the spectral measure of $H_\mu$. The proof of the following theorem strongly relies on observations made in~\cite{ExnerLipovsky2010}. 

\begin{theorem}
\label{thm:oracle_thm}
Let $\Lambda\subseteq \R$ be a Borel set of positive Lebesgue measure, 
and let $\varepsilon>0$, $a, b \in \R$ with $a < b$, $\tau > 0$, and $C > 0$. 
Then there exists $L_0>0$ such that for each $L \geq L_0$ there exists a continuous mapping
\begin{align*}
 \triangle\from \M_{\pp}^{\tau,C} \bigl((-L,0);\C_\textup{symm}^{2\times 2}\bigr) \to \M_{\pp}^{C} \bigl((a,b);\C_\textup{symm}^{2\times 2}\bigr)
\end{align*}
such that the following holds. If $\mu\in \M_{\pp}^{\tau,C} \bigl(\R;\C_\textup{symm}^{2\times 2}\bigr)$ is separating for at most finitely many generations and $\Sigma_{\ac} (H_\mu) \supseteq \Lambda$ then there exists $x_0 > 0$ such that
\begin{align*}
 d_{(a,b)} \bigl(\triangle(\1_{(-L,0)}S_{x}\mu), \1_{(a,b)}S_{x}\mu\bigr) < \varepsilon
\end{align*}
holds for all $x \geq x_0$.
\end{theorem}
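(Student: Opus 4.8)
The plan is to deduce the statement from the theory of reflectionless measures combined with a compactness argument, following Remling's strategy \cite{Remling2007,Remling2011} adapted to the present matrix-measure formulation. Write $\mathcal{R}_\Lambda$ for the set of all $\nu \in \M_{\pp}^{\tau,C}(\R;\C_\textup{symm}^{2\times 2})$ that are \emph{reflectionless on $\Lambda$}, in the sense that the diagonal Green's function of $H_\nu$ has purely imaginary boundary values for a.e.\ $\lambda \in \Lambda$ (equivalently, the two half-line Weyl functions $m_\pm$ at a base point satisfy $m_+(\lambda + i0) = -\overline{m_-(\lambda + i0)}$ for a.e.\ $\lambda \in \Lambda$). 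Since this condition is preserved under vague limits, $\mathcal{R}_\Lambda$ is a closed, hence compact, subset of the compact space $\M_{\pp}^{\tau,C}(\R;\C_\textup{symm}^{2\times 2})$ (cf.\ \cite[Proposition 3.1]{LenzSeifertStollmann2014}). The first step (Step~A), which is the heart of Remling's Oracle Theorem and which for the interface conditions \eqref{eq:lineCond} rests on the transfer-matrix analysis of \cite{ExnerLipovsky2010}, is the following: if $\Sigma_{\ac}(H_\mu) \supseteq \Lambda$ and $\mu$ is separating for at most finitely many generations, then every vague limit point of the right translates $(S_x\mu)_{x>0}$ lies in $\mathcal{R}_\Lambda$. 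The hypothesis on separating generations guarantees that after finitely many translates no separating generation is seen, so the limiting operators are genuinely coupled whole-line operators to which the reflectionless conclusion applies. In particular the $\omega$-limit set satisfies $\omega(\mu) \subseteq \mathcal{R}_\Lambda$, and since the orbit $(S_x\mu)_{x \geq 0}$ is precompact one has $\operatorname{dist}(S_x\mu, \mathcal{R}_\Lambda) \to 0$ as $x \to \infty$.

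The second step (Step~B) isolates the geometric content: a reflectionless measure is determined by its restriction to a left half-line. Concretely, I claim the restriction map $\nu \mapsto \1_{(-\infty,0)}\nu$ is injective on $\mathcal{R}_\Lambda$. Indeed, this restriction fixes $m_-$, and since $\Lambda$ has positive measure the reflectionless relation prescribes the boundary values of $m_+$ on $\Lambda$; within the reflectionless class this pins down $m_+$, hence the data on the right, by the rigidity of such measures. From injectivity and compactness I then extract the quantitative form actually needed: for every $\delta > 0$ there is $L_0$ such that whenever $\nu_1, \nu_2 \in \mathcal{R}_\Lambda$ agree on $(-L_0, 0)$ one has $d_{(a,b)}(\1_{(a,b)}\nu_1, \1_{(a,b)}\nu_2) < \delta$. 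This is a standard extraction argument: a failure would produce $\nu_1^{(k)}, \nu_2^{(k)} \in \mathcal{R}_\Lambda$ agreeing on $(-k,0)$ but with $(a,b)$-windows at distance $\geq \delta$; compactness yields limits agreeing on the whole left half-line yet with distinct $(a,b)$-windows, contradicting injectivity.

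With this oscillation bound in hand, the third step (Step~C) constructs the continuous oracle and concludes. Fix $L \geq L_0$ with $\delta = \varepsilon/2$; then the graph $\{(\1_{(-L,0)}\nu, \1_{(a,b)}\nu) : \nu \in \mathcal{R}_\Lambda\}$ is, up to error $\varepsilon/2$, the graph of a function from left windows to $(a,b)$-windows. To upgrade this to a genuine continuous map on the whole compact domain $\M_{\pp}^{\tau,C}((-L,0);\C_\textup{symm}^{2\times 2})$, I cover the domain by finitely many balls of small radius, choose for each ball meeting the projected graph a representative value $w_i = \1_{(a,b)}\nu_i \in \M_{\pp}^{C}((a,b);\C_\textup{symm}^{2\times 2})$ (and an arbitrary value, e.g.\ the zero measure, otherwise), and set $\triangle := \sum_i \phi_i\, w_i$ for a subordinate partition of unity $(\phi_i)$. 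This is precisely why the target is $\M_{\pp}^{C}$ rather than $\M_{\pp}^{\tau,C}$: the former is convex, since a convex combination of pure point symmetric matrix measures supported in $(a,b)$ with variation at most $C$ is again of this form, so the averaged measure stays in the target and $\triangle$ is continuous. By the oscillation bound and the smallness of the radii, $d_{(a,b)}(\triangle(\1_{(-L,0)}\nu), \1_{(a,b)}\nu) < \varepsilon$ for every $\nu \in \mathcal{R}_\Lambda$. For the given $\mu$ I then chain four estimates valid for large $x$: using $\operatorname{dist}(S_x\mu, \mathcal{R}_\Lambda) \to 0$ and vague continuity of restriction to open intervals, $\1_{(-L,0)}S_x\mu$ is close to $\1_{(-L,0)}\nu_x$ for some $\nu_x \in \mathcal{R}_\Lambda$ with $\nu_x$ close to $S_x\mu$; continuity of $\triangle$; the oracle property on $\mathcal{R}_\Lambda$; and closeness of $\1_{(a,b)}\nu_x$ to $\1_{(a,b)}S_x\mu$. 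Taking $x_0$ large enough makes the total error below $\varepsilon$ for all $x \geq x_0$, which is the assertion.

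The main obstacle is the combination of Step~A and the injectivity in Step~B. Verifying that absolutely continuous spectrum forces the $\omega$-limits to be reflectionless requires Remling's theory transported to the interface conditions \eqref{eq:lineCond} through the explicit transfer matrices across each atom, which is exactly where the observations of \cite{ExnerLipovsky2010} enter; and the injectivity of left restriction on $\mathcal{R}_\Lambda$ is the rigidity statement for reflectionless matrix measures that makes the oracle well posed and generalizes \cite[Theorem~3.8]{ExnerSeifertStollmann2014}. By contrast, the compactness of the measure classes, the oscillation bound, and the partition-of-unity construction are soft once these two analytic inputs are secured.
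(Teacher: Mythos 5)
Your proposal is correct and follows essentially the same route as the paper's proof: the $\omega$-limits of translates are reflectionless on $\Lambda$ by \cite[Theorem 6.2]{ExnerLipovsky2010} (exactly where the separating hypothesis enters), the left restriction rigidly determines a reflectionless measure (the paper packages your injectivity-plus-compactness extraction as uniform continuity of $\1_{(-\infty,0)}\nu \mapsto \1_{(0,\infty)}\nu$, following \cite[Proposition~2]{Remling2007}), and the oracle is built over a finite net by convex combinations, exploiting convexity of the target class $\M_{\pp}^{C}$ --- the paper uses explicit weights $\bigl(3\delta - d\bigr)^+$ plus the Dugundji--Borsuk extension theorem where you use a partition of unity, which is an inessential difference. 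Two points you gloss over are handled in the paper by its careful choice of metrics (Step~1 of its proof): your oscillation bound must hold for \emph{approximate}, not exact, agreement of left windows (your extraction argument does give this), and restriction to open intervals is not vaguely continuous in general but is compatible with metrics built from functions compactly supported in those open intervals, which is what the paper's domination property~\eqref{eq:dominating} encodes.
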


\begin{proof}
{\bf Step~1.} This first step of the proof is of preparational nature. 
Observe first that it suffices to prove the assertion of the theorem for $\eps \leq 1$.
By compactness (and therefore equivalence of metrics), for $I\subseteq \R$ open we can use the metric $d_I$ induced by a countable subset $\set{f_{m,n}\in C_c(I):\; m,n\in\N}$,
where $\set{f_{m,n}:\; n\in\N}$ is a countable dense subset of the Banach space $C_0(K_m)$ for all $m\in\N$, and $(K_m)_{m\in\N}$ is an increasing sequence of compact subsets of $I$ whose union equals $I$.
Furthermore, for each $L > 0$ the metric $d_\R$ with this property can be chosen such that, additionally,
\begin{align}\label{eq:dominating}
 d_{(-L,0)} (\1_{(-L, 0)} \nu, \1_{(-L, 0)} \widetilde \nu) \leq d_\R (\nu, \widetilde \nu) \quad \text{and} \quad d_{(a,b)} (\1_{(a, b)} \nu, \1_{(a, b)} \widetilde \nu) \leq d_\R (\nu, \widetilde \nu)
\end{align}
holds for all $\nu, \widetilde \nu \in \M_{\pp}^{\tau,C} \bigl(\R;\C_\textup{symm}^{2\times 2})$.
We need to introduce some notation. For $\mu \in \M_{\pp}^{\tau,C} \bigl(\R;\C_\textup{symm}^{2\times 2})$, given as in Definition~\ref{def:mu}, and a fixed $t \in \R \setminus \{t_k :\; k \in \N_0\}$ we denote by $m_\pm (\cdot, t)$ the corresponding Titchmarsh--Weyl $m$-functions for the half-axes $(- \infty, t)$ and $(t, \infty)$, respectively. That is,
\begin{align*}
 m_\pm (z,t) = \pm \frac{u_{\pm}(\cdot;z)'(t)}{u_{\pm}(t;z)},
\end{align*}
where for each $z\in\C^+$ the function $u_\pm (\cdot;z)$ with $u_\pm (\cdot; z) |_{(t_k, t_{k + 1})} \in H^2 (t_k, t_{k + 1})$ for all $k \in \N$ is a solution of 
\begin{align*}
 -u'' & = zu
\end{align*}
being square integrable at $\pm\infty$, respectively, and satisfying
\begin{align*}
 u'(t_k \rlim) - u'(t_k \llim) & = \frac{\mathfrak{a}_k}{2} \bigl(u(t_k \rlim) + u(t_k \llim)\bigr) + \frac{\mathfrak{c}_k}{2} \bigl(u'(t_k \rlim) + u'(t_k \llim)\bigr),\\
 u(t_k \rlim) - u(t_k \llim) & = -\frac{\overline{\mathfrak{c}_k}}{2} \bigl(u(t_k \rlim) + u(t_k \llim)\bigr) + \frac{\mathfrak{b}_k }{2} \bigl(u'(t_k \rlim) + u'(t_k \llim)\bigr)
\end{align*}
for each $k \in \N_0$. We say that $\nu$ is {\em reflectionless} on the given Borel set $\Lambda$ if
\begin{equation}\label{eq:reflectionless}
 m_+ (E+i0,t) = -\overline{m_- (E+i0,t)} \quad \text{for a.e.}~E\in\Lambda
\end{equation}
holds for some $t \in \R \setminus \{t_k :\; k \in \N_0\}$. Note that in this case~\eqref{eq:reflectionless} holds for all~$t\in \R \setminus \{t_k :\; k \in \N_0\}$.
We denote by $\mathcal{R}^{\tau, C}(\Lambda)$ the set of all measures $\nu \in \M_{\pp}^{\tau,C} \bigl(\R;\C_\textup{symm}^{2\times 2})$ which, additionally, are reflectionless on $\Lambda$. 
Following the lines of the proof of~\cite[Proposition~2]{Remling2007} it turns out that the mapping $\1_{(-\infty,0)} \mu \mapsto \1_{(0,\infty)} \mu$, $\mu \in \mathcal{R}^{\tau, C} (\Lambda)$, is uniformly continuous w.r.t.\ $d_{(-\infty,0)}$ and $d_{(0,\infty)}$. 
By the concrete choice of these metrics at the beginning of the proof, there exists $L_0 > 0$ such that for all $L\geq L_0$ there exists and $\delta \in (0, \varepsilon/4)$ such that
\begin{align*}
 d_{(-L,0)} (\1_{(-L, 0)} \nu, \1_{(-L, 0)} \tilde{\nu}) < 5 \delta \quad \Longrightarrow \quad d_{(a,b)} (\1_{(a, b)} \nu, \1_{(a, b)} \tilde{\nu}) < \frac{\varepsilon^2}{4}
 \end{align*}
holds for all $\nu, \tilde{\nu}\in \mathcal{R}^{\tau, C} (\Lambda)$. From now on we will fix $L\geq L_0$.
  
{\bf Step~2.} In this step we define the ``oracle'' $\triangle$ and establish some of its properties. Since $\M_{\pp}^{\tau,C} \bigl((-L, 0);\C_\textup{symm}^{2\times 2}\bigr)$ is compact, also the closed $\delta$-neighborhood
\begin{align*}
 \overline U_\delta := \set{\mu \in \M_{\pp}^{\tau,C} \bigl((-L, 0);\C_\textup{symm}^{2\times 2}\bigr) :\; \exists~\nu\in \mathcal{R}^{\tau, C} (\Lambda) : d_{(-L,0)} (\mu, \1_{(-L, 0)} \nu) \leq\delta}
\end{align*}
is compact. Hence, there exists a finite set $\mathcal{F} \subseteq \mathcal{R}^{\tau, C} (\Lambda)$ such that the open balls of radius $2 \delta$ around $\mathcal{F}$ cover $\overline U_\delta$, i.e.,
\begin{align}\label{eq:covering}
 \overline U_\delta \subseteq \bigcup_{\nu\in\mathcal{F}} B(\nu,2\delta).
\end{align}
For $\nu \in \mathcal{F}$ we define $\triangle (\1_{(-L, 0)} \nu) := \1_{(a, b)} \nu$; since the measures in $\mathcal{F}$ are reflectionless, it follows in the same way as in~\cite[Proposition~2]{Remling2007} that this is well-defined. Furthermore, for $\sigma\in \overline U_\delta$ we define $\triangle(\sigma)$ as a convex combination,
\begin{align*}
 \triangle(\sigma):= \sum_{\nu\in\mathcal{F}} \frac{\bigl(3\delta-d_{(-L,0)}(\sigma,\1_{(-L,0)}\nu)\bigr)^+}{\sum_{\nu\in\mathcal{F}} \bigl(3\delta-d_{(-L,0)}(\sigma,\1_{(-L,0)}\nu)\bigr)^+} \triangle(\1_{(-L, 0)} \nu),
\end{align*}
where $(\cdot)^+$ denotes the positive part. Then clearly $\triangle(\sigma)\in \M_{\pp}^{C} \bigl((a, b);\C_\textup{symm}^{2\times 2}\bigr)$ and $\triangle\from \overline U_\delta \to \M_{\pp}^{C} \bigl((a, b);\C_\textup{symm}^{2\times 2}\bigr)$ is continuous. Moreover, we observe that for $\tilde \nu \in \mathcal{F}$ we have 
\begin{align}\label{eq:folgerung}
 d_{(-L,0)} (\sigma, \1_{(-L, 0)} \tilde{\nu}) < 2 \delta \quad \Longrightarrow \quad d_{(a,b)} (\triangle(\sigma), \1_{(a, b)} \tilde{\nu}) < \frac{\varepsilon}{2}.
\end{align}
Indeed, given $\tilde{\nu}\in\mathcal{F}$ with $d_{(-L,0)} (\sigma, \1_{(-L, 0)} \tilde{\nu}) < 2 \delta$, by the triangle inequality we have $d_{(-L,0)} (\1_{(-L, 0)} \nu, \1_{(-L, 0)} \tilde{\nu}) < 5 \delta$ for all $\nu \in \mathcal{F}$ contributing to the sum. Thus, Step~1 implies $d_{(a,b)} (\1_{(a, b)} \nu,\1_{(a, b)} \tilde{\nu}) < \varepsilon^2 / 4$ for these $\nu$, and by a reasoning as in~\cite[Lemma 2]{Remling2007} we obtain
\begin{align*}
 d_{(a,b)} (\triangle(\sigma), \1_{(a, b)} \tilde{\nu}) < \frac{3}{8} \eps^4 \ln (16 \eps^{-4}) < \frac{\varepsilon}{2}.
\end{align*}
(Note that for all $\sigma\in \overline U_\delta$ there exists $\tilde{\nu}\in\mathcal{F}$ with $d_{(-L,0)}(\sigma,\tilde{\nu}_-)\leq 2\delta$.) 
Now the extension theorem of Dugundji and Borsuk \cite[Chapter II, Theorem 3.1]{BessagaPelczynski1975} yields a continuous extension of $\triangle$ to $\M_{\pp}^{\tau,C} \bigl((-L, 0);\C_\textup{symm}^{2\times 2}\bigr)$ via convex combinations, and thus this extension again maps into $\M_{\pp}^{C} \bigl((a, b);\C_\textup{symm}^{2\times 2}\bigr)$.
  
{\bf Step~3.} It remains to show that the mapping $\triangle$ has the desired properties. 
Let $\mu\in \M_{\pp}^{\tau,C} \bigl(\R;\C_\textup{symm}^{2\times 2}\bigr)$ with $\Sigma_{\ac}(H_\mu)\supseteq \Lambda$. 
Due to the fact that $\mu$ is separating for at most finitely many generations, by~\cite[Theorem 6.2]{ExnerLipovsky2010} the so-called $\omega$-limits of $\mu$ are reflectionless on $\Lambda$, that is, there exists $x_0 > 0$ such that for each $x \geq x_0$ we have some $\nu \in \mathcal{R}^{\tau, C} (\Lambda)$ with $d_\R(S_x\mu,\nu) < \delta$. Then, by~\eqref{eq:dominating}, 
\begin{align}\label{eq:desBrauchma}
 d_{(-L,0)} (\1_{(-L, 0)} S_x \mu, \1_{(-L, 0)} \nu) < \delta \quad \text{and} \quad d_{(a,b)} (\1_{(a, b)} S_x \mu, \1_{(a, b)} \nu) < \delta
\end{align}
holds for this $\nu$. Hence, $\1_{(-L, 0)} S_x \mu \in \overline U_\delta$ and due to~\eqref{eq:covering} there exists $\tilde{\nu}\in \mathcal{F}$ such that
\begin{align}\label{eq:naSowas}
 d_{(-L,0)} (\1_{(-L, 0)} S_x \mu, \1_{(-L, 0)} \tilde{\nu}) < 2\delta.
\end{align}
By~\eqref{eq:folgerung} we thus obtain 
\begin{align}\label{eq:desA}
 d_{(a,b)} (\triangle(\1_{(-L, 0)} S_x\mu), \1_{(a, b)} \tilde{\nu} ) < \varepsilon / 2.
\end{align}
Note that by~\eqref{eq:naSowas} and~\eqref{eq:desBrauchma} we have 
\begin{align*}
 & d_{(-L,0)} (\1_{(-L, 0)} \nu, \1_{(-L, 0)} \tilde \nu)\\
 & \leq d_{(-L,0)} (\1_{(-L, 0)} \nu, \1_{(-L, 0)} S_x \mu) + d_{(-L,0)} (\1_{(-L, 0)} S_x \mu, \1_{(-L, 0)} \tilde \nu) \\
 & < 3 \delta.
\end{align*}
Therefore Step~1 implies $d_{(a,b)} (\1_{(a, b)} \nu, \1_{(a, b)} \tilde{\nu}) < \varepsilon^2 / 4$ and we can conclude with the help of~\eqref{eq:desA} and~\eqref{eq:desBrauchma}
\begin{align*}
 d_{(a,b)} \bigl(\triangle(\1_{(-L, 0)} S_x\mu), \1_{(a, b)} (S_x\mu)\bigr) < \frac{\varepsilon}{2} + \frac{\varepsilon^2}{4} + \delta < \varepsilon.
\end{align*}
This completes the proof.
\end{proof}

\section{Proof of the main result}
\label{sec:proof_main_result}

In this section we prove the main result of this note. The proof is carried out in four steps.

{\bf Step~1.} As $\sigma_{\rm ac} (H_\Gamma) \neq \varnothing$, it follows from Proposition~\ref{prop:unitary_equivalence} that there exist $n \in \N_0$ and $s \in \set{0,\ldots,b_n-1}$ such that the operator $H_{n, s}$ in Definition~\ref{def:Hns} has nonempty absolutely continuous spectrum. Thus, if we replace the boundary condition of the functions in the domain of $H_{n, s}$ at $t_n$ by a Dirichlet boundary condition (which is a rank one perturbation in the resolvent sense and, hence, does not change the absolutely continuous spectrum), it follows that the operator $H_\mu$ in Definition~\ref{def:mu} associated with the sequence $(t_n, t_{n + 1}, \dots)$ with $\mu$ given in~\eqref{eq:mu} and $\mathfrak{a}_k, \mathfrak{b}_k, \mathfrak{c}_k$ given in~\eqref{eq:parameters}, $k \geq n$, has a nonempty absolutely continuous spectrum. 

{\bf Step~2.} In this step we show that $\mu$ is separating for at most finitely many generations. For this let first $k \in \N$ be arbitary and let us calculate how the coefficients $\alpha_k, \beta_k, \gamma_k$ and the branching numbers $b_k$ can be recovered from $\mathfrak{a}_k, \mathfrak{b}_k, \mathfrak{c}_k$. We distinguish two cases. First, if $\Re \mathfrak{c}_k = 0$ then the last identity in~\eqref{eq:parameters} together with the first assumption in condition~(d) of the theorem yields
\begin{align*}
 (1 - b_k) (\alpha_k \beta_k + |\gamma_k|^2 + 4) = 0.
\end{align*}
From this and the second assumption in~(d) it follows $b_k = 1$. Hence~\eqref{eq:parameters} yields $\alpha_k = \mathfrak{a}_k$, $\beta_k = \mathfrak{b}_k$ and $\gamma_k = \mathfrak{c}_k$. In the second case, $\Re \mathfrak{c}_k \neq 0$, the last identity in~\eqref{eq:parameters} implies $b_k > 1$ and
\begin{align}\label{eq:weiterSo}
 \alpha_k \beta_k + (\Im \gamma_k)^2  = \frac{8 (1 - b_k) - 4 \Re \mathfrak{c}_k (\sqrt{b_k} + 1)^2}{\Re \mathfrak{c}_k (\sqrt{b_k} - 1)^2 - 2 (1 - b_k)},
\end{align}
where the denominator is nonzero since otherwise the last identity in~\eqref{eq:parameters} would imply $\sqrt{b_k} \Re \mathfrak{c}_k = 0$, a contradiction. Plugging~\eqref{eq:weiterSo} in the identities~\eqref{eq:parameters} we get
\begin{align}\label{eq:schlimmSchlimm}
\begin{split}
 \alpha_k = \frac{\mathfrak{a}_k}{16} \mathcal{I}_k, \quad \beta_k = \frac{\mathfrak{b}_k}{16 b_k} \mathcal{I}_k, \quad \text{and} \quad \Im \gamma_k = \frac{\Im \mathfrak{c}_k}{16 \sqrt{b_k}} \mathcal{I}_k,
\end{split}
\end{align}
where 
\begin{align*}
 \mathcal{I}_k = - \frac{32 (1 - b_k) \sqrt{b_k}}{\Re \mathfrak{c}_k (\sqrt{b_k} - 1)^2 - 2 (1 - b_k)}.
\end{align*}
In particular,
\begin{align*}
 \alpha_k \beta_k + (\Im \gamma_k)^2 = & \frac{\mathfrak{a}_k \mathfrak{b}_k + (\Im \mathfrak{c}_k)^2}{256 b_k} \mathcal{I}_k^2.
\end{align*}
Comparing this identity to~\eqref{eq:weiterSo} we find
\begin{align}\label{eq:auweia}
 - 1024 b_k (b_k - 1) \left( (b_k - 1) \left( \mathfrak{a}_k \mathfrak{b}_k + |\mathfrak{c}_k|^2 + 4 \right) + 4 \Re \mathfrak{c}_k (b_k + 1) \right) = 0,
\end{align}
which has the unique solution
\begin{align}\label{eq:solution}
 b_k = \frac{|2 - \mathfrak{c}_k|^2 + \mathfrak{a}_k \mathfrak{b}_k}{|2 + \mathfrak{c}_k|^2 + \mathfrak{a}_k \mathfrak{b}_k},
\end{align}
where the denominator is nonzero since otherwise~\eqref{eq:auweia} would imply $\Re \mathfrak{c}_k = 0$.

Assume now that the measure $\mu$ is separating for some generation $k \geq n$. Then $\Im \mathfrak{c}_k = 0$ and it follows from the last equation in~\eqref{eq:parameters} that $\Im \gamma_k = 0$. Let us again distinguish two cases. If $\Re \mathfrak{c}_k = 0$ then the above considerations yield
\begin{align*}
 \alpha_k \beta_k + |\gamma_k|^2 = \mathfrak{a}_k \mathfrak{b}_k + |\mathfrak{c}_k|^2 = 4.
\end{align*}
In the other case, $\Re \mathfrak{c}_k \neq 0$, from~\eqref{eq:schlimmSchlimm} we obtain
\begin{align}\label{eq:soIsses}
 \alpha_k \beta_k + |\gamma_k|^2 & = \frac{\mathfrak{a}_k \mathfrak{b}_k}{256 b_k} \mathcal{I}_k^2 = \frac{4 - \mathfrak{c}_k^2}{256 b_k} \mathcal{I}_k^2 = - \frac{4 (\sqrt{b_k} + 1)^2 (\mathfrak{c}_k^2 - 4)}{(2 - \mathfrak{c}_k + \sqrt{b_k} (2 + \mathfrak{c}_k) )^2}.
\end{align}
As~\eqref{eq:solution} and $\mathfrak{a}_k \mathfrak{b}_k = 4 - \mathfrak{c}_k^2$ imply $b_k = \frac{2 - \mathfrak{c}_k}{2 + \mathfrak{c}_k}$, using~\eqref{eq:soIsses} we arrive again at $\alpha_k \beta_k + |\gamma_k|^2 = 4$. Thus the conditions~\eqref{eq:graphCond} are separating for generation $k$ in both cases. Since by assumption this is the case for at most finitely many $k \in \N$, it follows that $\mu$ is separating for at most finitely many generations.

{\bf Step~3.} In this step we show that the measure $\mu$ in Definition~\ref{def:mu} is eventually periodic, i.e., there exist $x, y \in \R$ with $x \neq y$ such that $\1_{[x, \infty)} \mu$ is a translate of $\1_{[y, \infty)} \mu$. Let $T:=\set{t_k:\; k\in\N_0}$ and $\ell := \max\{ t_{k+1}-t_k:\; n\in\N_0 \} + 1$. Moreover, let
\begin{align*}
 \mathcal{D}:= \set{\1_{(-1,\ell)}(S_{t}\mu):\; t\in T}.
\end{align*}
It follows from the assumption~(a) that the set $\mathcal{D}$ is finite and hence there exists $\varepsilon > 0$ such that 
\begin{align}\label{eq:DFinite}
 \nu_1, \nu_2 \in \mathcal{D}, \nu_1 \neq \nu_2 \quad \Longrightarrow \quad d (\nu_1, \nu_2) > 2 \varepsilon.
\end{align}
By Step~1 and~2, all assumptions of Theorem~\ref{thm:oracle_thm} are satisfied for the measure $\mu$ and the operator $H_\mu$ and hence there exist $L > \ell$, $x_0 > 0$, and a continuous function $\triangle\from \M_{\pp}^{\tau,C} \bigl((-L,0);\C_\textup{symm}^{2\times 2}\bigr) \to \M_{\pp}^{C} \bigl((-1, \ell);\C_\textup{symm}^{2\times 2}\bigr)$ such that
\begin{align}\label{eq:oracle}
 d_{(-1,\ell)} (\triangle (\1_{(- L, 0)} S_x \mu), \1_{(-1, \ell)} S_x \mu) \leq \eps \quad \text{for all}~x \geq x_0.
\end{align}
Due to the assumption~(a) of the theorem, there exist indices $k, K, m, M \in \N_0$ with $k \neq m$, $K > k$ and $M > m$ such that $t_K - t_k \geq L$, $t_M - t_m \geq L$, and
\begin{align}\label{eq:KProperty}
 S_{t_k}(\1_{[t_k, t_K)} \mu) = S_{t_m}(\1_{[t_m, t_M)}\mu);
\end{align}
here the indices can be chosen such that $y^1 := t_K > x_0$ and $z^1 := t_M > x_0$. By~\eqref{eq:KProperty} it follows $\1_{(- L, 0)} S_{y^1} \mu = \1_{(- L, 0)} S_{z^1} \mu$. Hence~\eqref{eq:oracle} yields
\begin{align*}
 d_{(-1,\ell)}\bigl(\1_{(-1,\ell)} S_{y^1} \mu, \1_{(-1,\ell)} S_{z^1}\mu \bigr) \leq 2 \varepsilon.
\end{align*}
Combining this fact with~\eqref{eq:DFinite} we obtain 
\begin{align}\label{eq:extended}
 \1_{(-1,\ell)} S_{y^1}\mu = \1_{(-1,\ell)} S_{z^1}\mu.
\end{align}
Define $y^2:=\min (T \cap (y^1,\infty))$ and $z^2:=\min (T \cap (z^1,\infty))$. Then~\eqref{eq:extended} and the definition of $\ell$ imply
\begin{align*}
 z^2-z^1 = y^2-y^1 \quad \text{and} \quad S_{y^1}(\1_{[y^1,y^2)}\mu) = S_{z^1}(\1_{[z^1,z^2)}\mu).
\end{align*}
Together with~\eqref{eq:KProperty} we even have
\begin{align*}
 S_{t_k} \1_{[t_k, y^2)} \mu = S_{t_m} \1_{[t_m, y^2)};
\end{align*}
in particular, $\1_{(- L, 0)} S_{y^2} \mu = \1_{(-L, 0)} S_{z^2} \mu$ holds and we can apply~\eqref{eq:oracle} and~\eqref{eq:DFinite} to obtain $\1_{(-1, \ell)} S_{y^2} \mu = \1_{(-1, \ell)} S_{z^2} \mu$. Thus, with $y^3 := \min (T \cap (y^2, \infty))$ and $z^3 := \min (T \cap (z^2, \infty))$ we observe that $y^3 - y^2 = z^3 - z^2$ and that $\1_{[t_k, y^3)} \mu$ and $\1_{[t_k, z^3)} \mu$ are translates of each other. Iterating this procedure, we obtain sequences $(y^n)$ and $(z^n)$ in $T$ tending to $+ \infty$ such that $\1_{[t_k, y^n)} \mu$ is a translate of $\1_{[t_m, z^n)}\mu$ and $y^{n+1} - y^n = z^{n+1} - z^n$ for all $n\in\N$. Hence $\1_{[t_k,\infty)}\mu$ is a translate of $\1_{[t_m,\infty)}\mu$, i.e., $\mu$ is eventually periodic.

{\bf Step~4.} 
First, note that $b_k>1$ implies $\mathfrak{c}_k\neq 0$ by the first part of Step~2.
In order to obtain the claim of the theorem, note that, therefore, by Step~3 the sequence $\bigl( (t_{k + 1} - t_k, \mathfrak{a}_k, \mathfrak{b}_k, \mathfrak{c}_k) \bigr)_{k \in \N}$ is eventually periodic. Moreover, by Step~2 the coefficients $\alpha_k, \beta_k, \gamma_k$ and the branching numbers $b_k$ are uniquely determined by this data, and thus also the sequence $\bigl( (t_{k + 1} - t_k, b_k, \alpha_k, \beta_k, \gamma_k) \bigr)_{k \in \N}$ is eventually periodic. This completes the proof of Theorem~\ref{thm:main}.

\section{Examples}
\label{sec:examples}

In this section we provide examples of classes of vertex conditions to which Theorem~\ref{thm:main} can be applied. Moreover, we provide counterexamples to the result in cases where several of our assumptions are violated.

\begin{example}
Let $V_n$ be any $(b_n - 1) \times b_n$-matrix according to the assumptions of Hypothesis~\ref{hyp:conditions}~(c). Then the choice $U_n = - I \in \C^{(b_n - 1) \times (b_n - 1)}$, $\alpha_n = \beta_n = \gamma_n = 0$ corresponds to standard (also called Kirchhoff or natural) vertex  conditions
\begin{align*}
 f_{v-} = f_{v1+} = \dots = f_{vb_n+} \quad \text{and} \quad \sum_{j=1}^{b_n} f_{vj+}'-f_{v-}' = 0
\end{align*}
at all vertices $v$ of generation $n$. The same choice of $U_n$ together with $\beta_n = \gamma_n = 0$ and arbitrary $\alpha_n \in \R$ leads to so-called $\delta$-couplings,
\begin{align*}
 f_{v-} = f_{v1+} = \dots = f_{vb_n+} \quad \text{and} \quad \sum_{j=1}^{b_n} f_{vj+}'-f_{v-}' = \alpha_n f_{v-}.
\end{align*}
Moreover, with $U_n = I$, $\alpha_n = \gamma_n = 0$ and $\beta_n \in \R$ arbitrary we arrive at weighted $\delta'$ conditions
\begin{align*}
 f_{v1+}' = \dots = f_{vb_n+}' = \frac{1}{b_n} f_{v-}' \quad \text{and} \quad \frac{1}{b_n} \sum_{j=1}^{b_n} f_{vj+} - f_{v-} = \beta_n f_{v-}'.
\end{align*}
If for each $n \in \N$ one of these types of conditions is chosen then the assumptions (b)--(d) of Theorem \ref{thm:main} are satisfied. 
Note that in the special case of standard vertex conditions at all vertices Theorem~\ref{thm:main} reduces to~\cite[Theorem 5.1]{ExnerSeifertStollmann2014}. 
We remark that in the contex of quantum graphs the above types of coupling conditions are frequently used; cf.~\cite{Kuchment2007}.
\end{example}

The following two examples show that the condition~(d) in Theorem~\ref{thm:main} is needed in order to conclude eventual periodicity of $(b_n)$ and the coefficient sequences $(\alpha_n), (\beta_n)$ and $(\gamma_n)$. Indeed, the first example shows that in the case $\Re \gamma_n \neq 0$ there can be absolutely continuous spectrum for the operator on a tree with an aperiodic sequence of coefficients taking only finitely many values.

\begin{example}
Let $\alpha_n = \beta_n = 0$ for all $n \in \N$ and 
\begin{align*}
  (\gamma_n, b_n) = 
 \begin{cases}
  (2/3, 4), & \text{if}~n = 2^k~\text{for some}~k \in \N, \\
  (1, 9), & \text{else}.
 \end{cases}
\end{align*}
Then  the conditions~(a),~(b) and~(c) of Theorem~\ref{thm:main} are satisfied. Moreover, direct computation according to~\eqref{eq:parameters} yields $\mathfrak{a}_n = \mathfrak{b}_n = \mathfrak{c}_n = 0$ for all $n \in \N$. 
Thus, for $\Re \gamma_n \neq 0$ periodicity of $( (\mathfrak{a}_n, \mathfrak{b}_n, \mathfrak{c}_n))_n$ does not necessarily imply periodicity of $(\alpha_n)_n, (\beta_n)_n$ and $(\gamma_n)_n$. 
Furthermore, the Hamiltonian corresponding to these parameters and the Hamiltonian corresponding to $\widetilde \alpha_n = \widetilde \beta_n = 0$, $\widetilde \gamma_n = 2/3$, $\widetilde b_n = 4$ for all $n \in \N$ are unitarily equivalent by Proposition~\ref{prop:unitary_equivalence} and, hence, have the same absolutely continuous spectrum. 
Since the halfline operator corresponding to $\mathfrak{a}_n = \mathfrak{b}_n = \mathfrak{c}_n = 0$ for all $n \in \N$  is given by the free Laplacian on $(0,\infty)$ (with a boundary condition at zero determined by $\theta$), the absolutely continuous spectrum equals $[0, \infty)$ and, in particular, is nonempty. 
We remark that this reasoning is independent of the periodicity of the sequence $(t_{n + 1} - t_n)$; cf.\ also~\cite[Example~7.1]{ExnerLipovsky2010}.
\end{example}

The second example shows that in the case $\alpha_n\beta_n+\abs{\gamma_n}^2 + 4 = 0$ a similar phenomenon may occur.

\begin{example}
Set $\alpha_n = 4, \beta_n = -1, \gamma_n = 0, b_n = 4$ if $n = 2^k$ for some $k \in \N$ and $\alpha_n = 6, \beta_n = -2/3, \gamma_n = 0, b_n = 9$ otherwise. 
Then the conditions~(b)--(c) of Theorem~\ref{thm:main} are fulfilled and $\alpha_n \beta_n + |\gamma_n|^2 + 4 = 0$ for all $n \in \N$.
Moreover, $\mathfrak{a}_n = 2$, $\mathfrak{b}_n = -2$ and $\mathfrak{c}_n = 0$ for all $n \in \N$, and the same result appears in the case $\widetilde \alpha_n = 6, \widetilde \beta_n = -2/3, \widetilde \gamma_n = 0, \widetilde b_n = 9$ for all $n \in \N$. 
Thus by Proposition~\ref{prop:unitary_equivalence} we have two Hamiltonians being unitarily equivalent, one of them having periodic coefficients and branching numbers, the other one not. 
Hence their absolutely continuous spectra coincide. 
Note that in this case the coupling conditions for the halfline operators in Definition~\ref{def:Hns} and Proposition~\ref{prop:unitary_equivalence} are given by
\begin{align*}
  u(t_n+) & = -u'(t_n-),\\
  u'(t_n+) & = u(t_n-).
\end{align*}
\end{example}

\end{document}